\author{Masaki Kameko}
\address{Department of Mathematical Sciences,
Shibaura Institute of Technology,
307 Minuma-ku Fukasaku, Saitama-City 337-8570, Japan}
\email{kameko@shibaura-it.ac.jp}
\thanks{This work was supported by JSPS KAKENHI Grant Number JP17K05263.}
\newtheorem{theorem}{Theorem}[section]
\newtheorem{proposition}[theorem]{Proposition}
\newtheorem{lemma}[theorem]{Lemma}
\theoremstyle{definition}
\title[Nilpotent elements]{
Nilpotent elements in the cohomology of the classifying space of a connected Lie group}
\newcommand{\spin}{\mathrm{Spin}}
\newcommand{\Tor}{\mathrm{Tor}}
\begin{document}\maketitle

\begin{abstract}
We give an example of a compact connected Lie group of the lowest rank such that the mod $2$ cohomology ring of its classifying space has a nonzero nilpotent element.
\end{abstract}

 
\section{Introduction}\label{section:1}


Let $p$ be a prime number and  $G$ a compact  Lie group. 
In \cite{quillen-1971}, Quillen defined
a homomorphism,
\[
q_G \colon H^{*}(BG;\mathbb{Z}/p)\to \lim_{\stackrel{\longleftarrow}{A\in \mathcal{A}}} H^{*}(BA;\mathbb{Z}/p),
\]
where $\mathcal{A}$ is the category of elementary abelian $p$-subgroups of $G$ and proved that $q_G$ is an $F$-isomorphism,
that is, each element in the kernel of $q_G$ is nilpotent and for each \[
y \in 
\displaystyle \lim_{\stackrel{\longleftarrow}{A\in \mathcal{A}}} H^{*}(BA;\mathbb{Z}/p), 
\]
there is a positive integer $n$ such that $y^{p^n}$ belongs to the image of $q_G$.
For $p=2$, $H^{*}(BA;\mathbb{Z}/2)$ is a polynomial ring. Hence, a nonzero element in the image of $q_G$ is not nilpotent. So, the nilradical
of $H^{*}(BG;\mathbb{Z}/2)$ is precisely the kernel of $q_G$. Thus, the above homomorphism $q_G$ is injective if and only if the mod $2$ cohomology ring 
$H^{*}(BG;\mathbb{Z}/2)$ has no nonzero nilpotent element.
In \cite{kono-yagita-1993},  Kono and Yagita showed that $q_G$ is not injective for $p=2$, $G=\mathrm{Spin}(11), E_7$
by showing the existence of a nonzero nilpotent element in $H^{*}(BG;\mathbb{Z}/2)$.
For an odd prime number $p$, Adams conjectured that $q_G$ is injective for all compact connected Lie groups.
Adams' conjecture remains an open problem.


On the other hand, for a compact connected Lie group $G$, a maximal torus $T$ exists. 
Let $W$ be the Weyl group $N(T)/T$.
We denote by $H^{*}(BT;\mathbb{Z})^W$ the ring of invariants of $W$.
We denote by $\Tor$ the torsion part of $H^{*}(BG;\mathbb{Z})$.  
Then, the inclusion map of $T$ induces a homomorphism,
\[
\iota_T^{*}\colon H^{*}(BG;\mathbb{Z})/\Tor\to H^{*}(BT;\mathbb{Z})^W.
\]
Borel showed that $\iota_T^*$  is injective.
In  \cite{feshbach-1981}, Feshbach gave a criterion for $\iota_T^*$ to be surjective, hence an isomorphism. 
In particular, after localized at $p$, $\iota_T^*$  is surjective if and only if
the $E_\infty$-term of the mod $p$ Bockstein spectral sequence of $BG$,
\[
H^{*}(BG;\mathbb{Z})/\Tor\otimes \mathbb{Z}/p,
\] 
has no nonzero nilpotent element. For $p=2$, Feshbach showed that for $G=\mathrm{Spin}(12)$, 
the $E_\infty$-term of the mod $2$ Bockstein spectral sequence of $BG$ has a nonzero nilpotent element.
As for spin groups, $\spin(n)$, Benson and Wood \cite{benson-wood-1995} computed the ring of invariants of the Weyl group
and they showed that $\iota_T^*$ is not surjective if and only if $n\geq 11$ and $n \equiv 3, 4, 5 \mod 8$.
However, as in the case of Adams' conjecture, for an odd prime number $p$, 
no example of a compact connected Lie group $G$ such that
the $E_\infty$-term of the mod $p$ Bockstein spectral sequence of $BG$ 
has a nonzero nilpotent element  is known. 


So, nonzero nilpotent elements in the cohomology of the classifying spaces of compact connected Lie groups are exciting subjects for study. 
However, 
no example of a compact connected Lie group $G$ such that $H^{*}(BG;\mathbb{Z}/2)$ has a nonzero nilpotent element
is known except for spin groups and the exceptional Lie group $E_7$. 
The purpose of this paper is to give a more straightforward  example to shed some light on the existence of nonzero nilpotent elements in the mod $2$ cohomology of 
the classifying space of a connected Lie group.

First, we define  a compact connected Lie group $G$. 
Let us consider the three fold product $SU(2)^3$ of the special unitary groups $SU(2)$.
Its center is an elementary abelian $2$-group $(\mathbb{Z}/2)^3$.
Let  $\Gamma$ be the kernel of the group homomorphism
$\varphi\colon (\mathbb{Z}/2)^3 \to \mathbb{Z}/2$ defined by $\varphi(a_1, a_2, a_3)=a_1a_2a_3$.
We define $G$ to be $SU(2)^3/\Gamma$.

Next, we state our results, saying that $G=SU(2)^3/\Gamma$ satisfies the required conditions.
Since $SU(2)^3/(\mathbb{Z}/2)^3=SO(3)^3$, 
we have the following fiber sequence:
\[
B\mathbb{Z}/2 \to BG \stackrel{\pi}{\longrightarrow} BSO(3)^3.
\]
Let $\pi_i\colon BSO(3)^3 \to BSO(3)$ be the projection onto the $i^{\mathrm{th}}$ factor.
The mod $2$ cohomology ring of $BSO(3)$ is given by
\[
H^*(BSO(3);\mathbb{Z}/2)=\mathbb{Z}/2[w_2, w_3],
\]
where $w_i$ is the universal $i$-th Stiefel-Whitney class for $i=2, 3$.

Let
$w_k'=\pi^*(\pi_1^{*}(w_k))$ and $w_k''=\pi^*(\pi_2^{*}(w_k))$.
Let $u_{16}$ be the Stiefel-Whitney class $w_{16}(\rho)$ of a real representation $\rho\colon G\to O(16)$.
We will give the definition of $\rho$ in Section~\ref{section:2}. 
Let $f_5$, $f_9$, $g_4$, $g_7$, $g_8$  be  polynomials 
defined by
\begin{align*}
f_5&=w_2'w_3''+w_2''w_3', 
\\
f_9& =w_3'^2 w_3''+w_3''^2w_3',
\\
g_4&=w_2'w_2'',
\\
g_7&=w_2'w_2''(w_3'+w_3''),
\\
g_8 &=w_3'w_3''(w_2'+w_2''),
\end{align*}
respectively. Then, our results are stated as follows:


\begin{theorem}
\label{theorem:1.1}
The mod $2$ cohomology ring of $BG$ is 
\[
\mathbb{Z}/2[ w_2', w_2'', w_3', w_3'', u_{16}]/(f_5, f_9)
\]
and its nilradical  is generated by
$
g_7$, $g_8$.
\end{theorem}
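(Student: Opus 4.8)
The plan is to run the mod~$2$ Serre spectral sequence of the fibre sequence $B\mathbb{Z}/2\to BG\xrightarrow{\pi}BSO(3)^3$, whose $E_2$-page is $\mathbb{Z}/2[w_2',w_3',w_2'',w_3'',w_2''',w_3''']\otimes\mathbb{Z}/2[t]$, where $w_k'''=\pi^{*}\pi_3^{*}(w_k)$ and $t\in H^1(B\mathbb{Z}/2;\mathbb{Z}/2)$. The central extension $1\to\mathbb{Z}/2\to G\to SO(3)^3\to1$ is obtained from the covering $SU(2)^3\to SO(3)^3$, classified by $(w_2',w_2'',w_2''')$, by pushing out along $\det\colon(\mathbb{Z}/2)^3\to\mathbb{Z}/2$; its $k$-invariant is therefore $w_2'+w_2''+w_2'''$, so $t$ transgresses to $d_2(t)=w_2'+w_2''+w_2'''$. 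By Kudo's transgression theorem the classes $t^2,t^4,t^8$ are transgressive with
\begin{align*}
d_3(t^2)&=\Sq^1(w_2'+w_2''+w_2''')=w_3'+w_3''+w_3''',\\
d_5(t^4)&=\Sq^2\Sq^1(w_2'+w_2''+w_2'''),\\
d_9(t^8)&=\Sq^4\Sq^2\Sq^1(w_2'+w_2''+w_2''').
\end{align*}
The first two differentials impose $w_2'''=w_2'+w_2''$ and $w_3'''=w_3'+w_3''$, eliminating the third factor; substituting these and using the Wu formulas $\Sq^1w_2=w_3$, $\Sq^2w_3=w_2w_3$, a short calculation reduces the remaining two transgressions, modulo the ideal already generated, to $d_5(t^4)\equiv f_5$ and $d_9(t^8)\equiv f_9$.

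To see that $t^{16}$ survives I would compute the restriction of $\rho$ to the central $\mathbb{Z}/2=\ker(G\to SO(3)^3)$ using the description in Section~\ref{section:2}: this subgroup acts by the scalar $-1$, so $\rho$ restricts to $16$ copies of the sign representation and $w(\rho|_{\mathbb{Z}/2})=(1+t)^{16}=1+t^{16}$. Hence $w_{16}$ restricts to $t^{16}$ on the fibre, $t^{16}$ is a permanent cycle, and the only fibre algebra generators being the $t^{2^k}$, the spectral sequence collapses at $E_{17}$. Lifting $t^{16}$ to $w_{16}$ identifies $E_\infty$ with $\mathbb{Z}/2[w_2',w_3',w_2'',w_3'']/(f_5,f_9)\otimes\mathbb{Z}/2[w_{16}]$; since the base classes are honest pullbacks and $w_{16}$ is an honest class restricting to $t^{16}$, there are no multiplicative extensions and the ring is as claimed.

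For the nilradical I would invoke Quillen's theorem: over $\mathbb{Z}/2$ it equals the kernel of $q_G$, hence the intersection of the minimal primes of $R=H^{*}(BG;\mathbb{Z}/2)$. As $w_{16}$ appears in none of $f_5,f_9,g_7,g_8$, I may work in $S=\mathbb{Z}/2[w_2',w_3',w_2'',w_3'']/(f_5,f_9)$. Because $f_9=w_3'w_3''(w_3'+w_3'')$ and $f_5=w_2'w_3''+w_2''w_3'$, the variety $V(f_5,f_9)$ is the union of four $2$-planes $C_1,\dots,C_4$ cut out by
\[
(w_2',w_3'),\qquad(w_2'',w_3''),\qquad(w_2'+w_2'',w_3'+w_3''),\qquad(w_3',w_3''),
\]
the first three being the loci where the $SO(3)$-classes of one of the three factors vanish, the last the locus where all three degree-three classes vanish. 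A direct check shows that $g_7$ and $g_8$ lie in each of these four primes, so both are nilpotent and $(g_7,g_8)$ is contained in the nilradical.

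The reverse inclusion is the crux, and I expect it to be the main obstacle: it asserts that $R/(g_7,g_8)$ is reduced, equivalently that $(f_5,f_9,g_7,g_8)$ equals the intersection of the four primes above. I would prove this by showing that the restriction map $S/(g_7,g_8)\to\prod_{i=1}^{4}\mathbb{Z}/2[C_i]$ to the coordinate rings of the four planes is injective; the target being reduced then forces the source to be reduced. Injectivity is a finite normal-form computation: using $f_5,f_9,g_7,g_8$ one puts an arbitrary element into a canonical form and verifies, degree by degree, that a representative vanishing on all four planes must already be zero. It is this bookkeeping, rather than any further topology, where the real work lies.
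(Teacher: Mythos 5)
The ring-structure half of your proposal is correct and is essentially the paper's own proof: the same Leray--Serre spectral sequence, the same transgressions $d_2(t)=w_2'+w_2''+w_2'''$, $d_3(t^2)=v_3$, $d_5(t^4)\equiv f_5$, $d_9(t^8)\equiv f_9$ via Kudo's theorem (the paper packages $\mathrm{Sq}^{2^k}\cdots\mathrm{Sq}^1$ on a degree-$2$ class as the Milnor operation $Q_k$, its Lemma~2.1), and the same use of the representation $\rho$: the paper also observes that $\rho$ restricts to $16\lambda$ on the central $\mathbb{Z}/2$, though it phrases the survival of $t^{16}$ as a contradiction rather than through the edge homomorphism. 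One small point you elide: to identify the $E_{10}$-page you need $f_9$ to be a non-zero-divisor modulo $f_5$; the paper notes this follows because $f_5, f_9$ are coprime in a unique factorization domain.

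The nilradical half is where you depart from the paper, and it is also where your proposal has a genuine gap. Your framework is sound: the nilradical of any commutative ring is the intersection of its minimal primes (Quillen's theorem is not needed for this and adds nothing here), nilpotence is insensitive to adjoining the polynomial variable $w_{16}$, your list of the four minimal primes over $(f_5,f_9)$ is correct, and so is the check that $g_7,g_8$ lie in all four. But that only gives $(g_7,g_8)\subseteq\sqrt{(0)}$. The reverse inclusion, $C_1\cap C_2\cap C_3\cap C_4\subseteq (f_5,f_9,g_7,g_8)$ in the polynomial ring, is the actual content of the theorem, and you do not prove it: your proposed route (injectivity of $S/(g_7,g_8)$ into the product of the coordinate rings of the four planes) is literally equivalent to the statement being proved, and the deferred ``finite normal-form computation'' is not finite as stated --- it is an assertion in every degree, so it needs an organizing device such as a justified Gr\"{o}bner/normal-form argument or a Hilbert series comparison. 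That device is exactly what the paper supplies: it defines the ring map $\eta'\colon H^{*}(BG)\to\mathbb{Z}/2[w_2',w_2'',u,w_{16}]/(u^3w_2'w_2''(w_2'+w_2''))$ with $w_3'\mapsto w_2'u$, $w_3''\mapsto w_2''u$, proves injectivity by comparing Poincar\'{e} series, and then reads off the nilradical in the target, where it is visibly the radical of a principal ideal, generated by $uw_2'w_2''(w_2'+w_2'')$. (Geometrically, $\eta$ is a simultaneous parametrization of your four planes: the loci $w_2'=0$, $w_2''=0$, $w_2'=w_2''$, $u=0$ map onto $C_1$, $C_2$, $C_3$, $C_4$ respectively, so the two approaches are close kin.) Until you carry out your bookkeeping step, or import an argument of this kind, the claim that the nilradical is no larger than $(g_7,g_8)$ remains unproven.
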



\begin{theorem}
\label{theorem:1.2}
The $E_\infty$-term of the mod $2$ Bockstein spectral sequence  of $BG$ is
\[
\mathbb{Z}/2[w_2'^{2}, w_2''^{2}, u_{16}] \otimes \Delta(g_4, g_8),
\]
where
$\Delta(g_4, g_8)$ is the vector space over $\mathbb{Z}/2$ spanned by $1$, $g_4$, $g_8$ and $g_4g_8$.
Its nilradical is generated by $g_8$. \end{theorem}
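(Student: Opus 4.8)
The plan is to run the mod $2$ Bockstein spectral sequence of $BG$, whose first differential is $d_1=\Sq^1$ and whose $E_\infty$-term is, by definition, $(H^*(BG;\mathbb{Z})/\Tor)\otimes\mathbb{Z}/2$. First I would record $\Sq^1$ on the generators. By naturality and the Wu formula on $BSO(3)$,
\[
\Sq^1 w_2'=w_3',\quad \Sq^1 w_2''=w_3'',\quad \Sq^1 w_3'=\Sq^1 w_3''=0,
\]
while $\Sq^1 w_{16}=w_1w_{16}+w_{17}=0$ because $H^1(BG;\mathbb{Z}/2)=0$ forces $w_1=0$ and $\rho$ is $16$-dimensional, so $w_{17}=0$. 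Since $d_1$ is a derivation vanishing on $w_2'^2,w_2''^2,w_{16}$, it is linear over $S:=\mathbb{Z}/2[w_2'^2,w_2''^2,w_{16}]$, and the whole problem reduces to computing $E_2=H(H^*(BG;\mathbb{Z}/2),\Sq^1)$ and then checking degeneration.

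Write $P=\mathbb{Z}/2[w_2',w_2'',w_3',w_3'',w_{16}]$, $I=(f_5,f_9)$ and $R=P/I=H^*(BG;\mathbb{Z}/2)$. The polynomials $f_5,f_9$ are $\Sq^1$-cycles, so $I$ is $d_1$-stable and $0\to I\to P\to R\to 0$ is a short exact sequence of $\Sq^1$-complexes. A K\"unneth computation gives $H(P,\Sq^1)=S$. The key observation is that in $P$
\[
f_5=\Sq^1 g_4,\qquad f_9=\Sq^1 g_8,\qquad g_8f_5+g_4f_9=\Sq^1(g_4g_8),
\]
so the two generators of $I$ and their Koszul syzygy all become $\Sq^1$-exact in $P$. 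I would compute $H(I,\Sq^1)$ from the Koszul resolution of the regular sequence $(f_5,f_9)$ (they are coprime in the UFD $P$) and feed it into the long exact $\Sq^1$-cohomology sequence; the three identities above make the map $H(I)\to H(P)=S$ induced by $I\hookrightarrow P$ vanish, and reading off the resulting short exact sequence gives
\[
E_2=H(R,\Sq^1)=S\cdot 1\oplus S\cdot g_4\oplus S\cdot g_8\oplus S\cdot g_4g_8=S\otimes\Delta(g_4,g_8),
\]
a free $S$-module whose extra generators $g_4,g_8,g_4g_8$ are precisely the lifts of the generators of $H(I)$ detected by $f_5,f_9$ and the syzygy. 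The product structure is then pinned down by $g_4^2=w_2'^2w_2''^2\in S$ and by
\[
g_8^2=\Sq^1(w_2'w_3'\,w_3''^2(w_2'^2+w_2''^2))=0\quad\text{in }E_2.
\]

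Degeneration is immediate: every $S$-module generator $1,g_4,g_8,g_4g_8$ sits in a degree divisible by $4$, so $E_2$ is concentrated in degrees $\equiv 0\pmod 4$, whereas each $d_r$ ($r\ge 2$) raises degree by $1$. Hence all higher differentials vanish and $E_2=E_\infty$, which is the first assertion.

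For the nilradical, $g_8^2=0$ and $(g_4g_8)^2=0$ show that the ideal $(g_8)=S\,g_8\oplus S\,g_4g_8$ consists of square-zero elements and therefore lies in the nilradical. Conversely $E_\infty/(g_8)\cong S[g_4]/(g_4^2+w_2'^2w_2''^2)$, which I would show is reduced: if $a+bg_4$ with $a,b\in S$ were nilpotent, then $(a+bg_4)^2=a^2+b^2w_2'^2w_2''^2$ would vanish in the domain $\mathbb{Z}/2[w_2',w_2'',w_{16}]$, giving $a^2=(bw_2'w_2'')^2$, hence $a=bw_2'w_2''$ by injectivity of Frobenius; comparing the parities of the exponents of $w_2'$ and $w_2''$ on the two sides forces $a=b=0$. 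Thus the nilradical of $E_\infty$ is exactly $(g_8)$. The main obstacle is the middle step: organising the homological algebra of $I$ so that the three boundary identities produce exactly the classes $g_4,g_8,g_4g_8$ and nothing more; once $E_2$ is identified, degeneration and the nilradical computation are short.
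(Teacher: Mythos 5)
Your proposal is correct, and its skeleton is the paper's: identify $E_1=H^{*}(BG)$ with $d_1=Q_0=\Sq^1$, kill $\Sq^1 w_{16}$ by the Wu formula, compute the $\Sq^1$-cohomology using the regularity of $f_5,f_9$ together with the identities $\Sq^1 g_4=f_5$, $\Sq^1 g_8=f_9$, and collapse the spectral sequence by degree parity. Where you genuinely differ is the packaging of the homological algebra. The paper never forms the ideal $I$ or its Koszul resolution: writing $R_0=P$, $R_1=P/(f_5)$, $R_2=P/(f_5,f_9)$, it iterates two multiplication short exact sequences, $0\to R_0 \stackrel{\times f_5}{\longrightarrow} R_0 \to R_1\to 0$ and then $0\to R_1 \stackrel{\times f_9}{\longrightarrow} R_1 \to R_2\to 0$; in each one the vanishing of odd-degree $Q_0$-cohomology splits the long exact sequence, and the connecting maps are surjective because $\delta_4(g_4)=1$, resp.\ $\delta_8(g_8)=1$ (your syzygy identity $\Sq^1(g_4g_8)=g_8f_5+g_4f_9$ enters there only implicitly, as $\delta_8(g_4g_8)=g_4$). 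This unrolled form makes the step you flag as ``the main obstacle'' disappear: there is no $H(I,\Sq^1)$ to organize. Your route does go through --- $H(I,\Sq^1)$ is free over $S$ on the classes of $f_5$, $f_9$ and $\Sq^1(g_4g_8)$ in degrees $5$, $9$, $13$, all of which are exact in $P$, so $H(I,\Sq^1)\to H(P,\Sq^1)$ vanishes and $g_4$, $g_8$, $g_4g_8$ are exactly the lifts of its generators --- but it costs one extra long exact sequence and the attendant bookkeeping. In exchange, your write-up supplies two things the paper leaves terse: a self-contained verification that $g_8^2=0$ in $E_2$, via $g_8^2=\Sq^1\bigl(w_2'w_3'w_3''^2(w_2'^2+w_2''^2)\bigr)$, where the paper instead quotes $g_8^2=0$ in $H^{*}(BG)$ from Theorem~\ref{theorem:1.1} (which rests on the injection $\eta'$); and an actual two-sided proof of the nilradical claim --- the paper only records $g_4^2=w_2'^2w_2''^2\neq 0$ and $g_8^2=0$, while you also check that $E_\infty/(g_8)\cong S[g_4]/(g_4^2+w_2'^2w_2''^2)$ is reduced, which is the step that pins the nilradical down to exactly $(g_8)$.
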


The computations involved in these theorems are similar to those of Quillen in \cite{quillen-1971-2} and Kono in \cite{kono-1986}. We have no claim for novelty in this respect.

The rank of $SU(2)^3/\Gamma$  is $3$.
If the rank of a compact connected Lie group  is lower than $3$, then it  is homotopy equivalent to 
one of $T$, $SU(2)$, $T^2$, $T \times SU(2)$, $SU(2)\times SU(2)$, $SU(3)$, $G_2$ or their quotient groups 
by their central subgroups.
For such a compact connected Lie group, the mod $2$ cohomology ring of
its classifying space is a polynomial ring so that it has no nonzero nilpotent element.
Thus $SU(2)^3/\Gamma$ is a lowest rank Lie group such that the mod $2$ cohomology of its classifying space has a nilpotent element.

We hope our results  shed some light on Adams' conjecture since, contrary to spin groups, we have an odd prime analog of the group $SU(2)^3/\Gamma$.
Let  $\Gamma_2$ be the kernel of the determinant homomorphism $\det\colon (S^1)^3 \to S^1$.
Consider the quotient group.
$
U(p)^3/\Gamma_2.
$
It is the odd prime counterpart 
as 
the group $U(2)^3/\Gamma_2$ is the central extension of the group $SU(2)^3/\Gamma$ by $S^1$.
But that is another story and 
we wish to deal with the group $U(p)^3/\Gamma_2$  in another paper.

In what follows, we assume that $G$ is the compact connected Lie group $SU(2)^3/\Gamma$.
We also  denote the mod $2$ cohomology ring of $X$ by $H^{*}(X)$ rather than $H^{*}(X;\mathbb{Z}/2)$.
This paper is organized as follows: In Section~\ref{section:2}, we compute the Leray-Serre spectral sequence associated with the fiber sequence
\[
B\mathbb{Z}/2 \stackrel{\iota}{\longrightarrow} BG \stackrel{\pi}{\longrightarrow}  BSO(3)^3
\]
to describe the mod $2$ cohomology ring $H^{*}(BG)$ and prove Theorem~\ref{theorem:1.1}.
In Section~\ref{section:3}, we define and compute the $Q_0$-cohomology of $H^{*}(BG)$ to complete the proof of 
Theorem~\ref{theorem:1.2}.

The author would like to thank the referee for his kind and helpful comments and suggestions. They improved the presentation of this paper considerably.


\section{The mod $2$ cohomology ring}\label{section:2}

In this section, we compute the mod $2$ cohomology ring of $BG$ by the Leray-Serre spectral sequence associated with the fiber sequence
\[
B\mathbb{Z}/2 \stackrel{\iota}{\longrightarrow} BG \stackrel{\pi}{\longrightarrow}  BSO(3)^3.
\]


First, we recall the mod $2$ cohomology rings of $BSO(3)$ and $BSO(3)^3$.
As stated in Section~\ref{section:1}, the mod $2$ cohomology ring is given by
\[
H^{*}(BSO(3);\mathbb{Z})=\mathbb{Z}/2[ w_2, w_3].
\]
Let 
$Q_i$ be the Milnor operation 
\[
Q_i\colon H^k(X) \to H^{k+2^{i+1}-1}(X)
\]
defined inductively by 
\[
Q_0=\mathrm{Sq}^1, \quad Q_{i+1}= \mathrm{Sq}^{2^{i+1}} Q_i + Q_i \mathrm{Sq}^{2^{i+1}}
\]
for $i\geq 0$. 
The Wu formula yields 
\begin{align*}
Q_0(w_2)&=w_3, 
\\
Q_1(w_2)&=w_2w_3, 
\\Q_2(w_2)&= w_2^3 w_3+w_3^3.
\end{align*}
Recall that $\pi_i\colon BSO(3)^3\to BSO(3)$ ($i=1,2,3$) is the projection onto the $i^{\mathrm{th}}$ factor. By abuse of notation, 
we define  elements $w_k'$, $w_k''$, $w_k'''$  ($k=2, 3$) in $H^{*}(BSO(3)^3)$ by
$
w_k'=\pi_1^*(w_k)$, 
$w_k''=\pi_2^*(w_k)$, 
$w_k'''=\pi_3^*(w_k).
$
Let us define elements $v_2$, $v_3$ by
\begin{align*}
v_2&=w_2'+w_2''+w_2''',
\\
v_3&=w_3'+w_3''+w_3''', 
\end{align*}
and
ideals $I_1$, $I_2$ by
\begin{align*}
I_1&=(v_2, v_3), 
\\
I_2&=(v_2, v_3, Q_1(v_2)).
\end{align*}
Again, by abuse of notation, let
\begin{align*}
f_5&=w_2'w_3''+w_2''w_3', \\
f_9&=w_3'^2w_3''+w_3''^2w_3' \in H^{*}(BSO(3)^3).
\end{align*}
Then, by direct calculations, we have
\begin{align*}
Q_0 v_2&=v_3, 
\\
Q_1 v_2&\equiv f_5 \quad  \mod I_1,
\\
Q_2v_2&\equiv  f_9 \quad  \mod I_2.
\end{align*}


Now, we compute the Leray-Serre spectral sequence.
The $E_2$-term is given by 
\[
E_2^{p,q}=H^p(BSO(3)^3) \otimes H^q(B\mathbb{Z}/2),
\]
so that
\[
E_2=\mathbb{Z}/2[w_2', w_2'', v_2, w_3', w_3'', v_3, u_1],
\]
where $u_1$ is the generator of $H^{1}(B\mathbb{Z}/2)\cong \mathbb{Z}/2$.
A possible first nontrivial differential is $d_2$. 
Let $\iota_i\colon SU(2) \to SU(2)^3$ be the inclusion map to the $i^{\mathrm{th}}$ factor,
\[
\iota_{1}(g)=(g, 1, 1),\quad  \iota_{2}(g)=(1, g, 1), \quad \iota_{3}(g)=(1, 1, g).
\]
Then, they induce the following commutative diagram.
\[
\begin{diagram}
\node{B\mathbb{Z}/2} \arrow{e,t}{=} 
\arrow{s} \node{B\mathbb{Z}/2}
\arrow{s} 
\\
\node{BSU(2)} \arrow{e,t}{\iota_i} \arrow{s} \node{BG} \arrow{s} 
\\
\node{BSO(3)} \arrow{e,t}{\iota_i} \node{BSO(3)^3.}
\end{diagram}
\]
Since the differential $d_2$ in the Leray-Serre spectral sequence associated with 
the left column homotopy fibration
 is 
 \[
 d_2(u_1)=w_2, 
 \]
 we have 
 \[
 d_2(u_1)=v_2
 \]
 in the Leray-Serre spectral sequence for the right column homotopy fibration.

To compute the higher differentials, we consider the following diagram.
Let $K(\mathbb{Z}/2,2)$ be the Eilenberg-MacLane space.
Let 
\[
k\colon BSO(3)^3 \to K(\mathbb{Z}/2, 2)
\]
 be a map representing the cohomology class $v_2\in H^2(BSO(3)^3)$ such that
\[
k^*(u_2)=v_2
\]
 where $u_2$ is the generator of $H^{2}(K(\mathbb{Z}/2, 2))\cong \mathbb{Z}/2$.
 Putting the path space fibration over $K(\mathbb{Z}/2,2)$ in the right column, we have the following commutative diagram.
\[
\begin{diagram}
\node{B\mathbb{Z}/2} \arrow{e,t}{=} 
\arrow{s} \node{B\mathbb{Z}/2} \arrow{s}
\\
 \node{BG} \arrow{s} \arrow{e} \node{PK(\mathbb{Z}/2,2)} \arrow{s}
\\
\node{BSO(3)^3}\arrow{e,t}{k} \node{K(\mathbb{Z}/2,2).}
\end{diagram}
\]
The mod $2$ cohomology rings and
the Leray-Serre spectral sequence for the path space fibration are known. We refer the reader to Serre's classical paper \cite{serre-1953}.
Its $E_2$-term is 
\[
E_2=\mathbb{Z}/2[ u_2, \mathrm{Sq}^1 u_2, \mathrm{Sq}^2 \mathrm{Sq}^1 u_2, \dots ] \otimes \mathbb{Z}/2[ u_1] 
\]
and nontrivial differentials are given by
\[
d_{2^n+1}(u_1^{2^n})=\mathrm{Sq}^{2^{n-1}}\cdots \mathrm{Sq}^1 u_2
\]
for $n\geq 0$.
 

\begin{lemma}\label{lemma:2.1}
For $x\in H^{2}(X)$ and $k\geq 1$, we have
\[
Q_k(x)= \mathrm{Sq}^{2^k}\cdots \mathrm{Sq}^{2^0} (x).
\]
\end{lemma}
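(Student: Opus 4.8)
The plan is to argue by induction on $k$, using the inductive definition of the Milnor operations together with the instability of the Steenrod squares on low-degree classes. Throughout, the only facts I would need are that $\mathrm{Sq}^1$ is a derivation, that $\mathrm{Sq}^n$ acts as squaring on a class of degree $n$, and that $\mathrm{Sq}^i$ annihilates classes of degree smaller than $i$.

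For the base case $k=1$, I would unwind the definition to get $Q_1 = \mathrm{Sq}^2 \mathrm{Sq}^1 + \mathrm{Sq}^1 \mathrm{Sq}^2$. Applied to a class $x$ of degree $2$, the second summand is $\mathrm{Sq}^1 \mathrm{Sq}^2 (x) = \mathrm{Sq}^1(x^2)$, since $\mathrm{Sq}^2$ acts as squaring on a degree-$2$ class; as $\mathrm{Sq}^1$ is a derivation and the coefficients lie in $\mathbb{Z}/2$, we have $\mathrm{Sq}^1(x^2) = 2x\,\mathrm{Sq}^1(x) = 0$. Hence $Q_1(x) = \mathrm{Sq}^2 \mathrm{Sq}^1(x) = \mathrm{Sq}^{2^1}\mathrm{Sq}^{2^0}(x)$, which is the asserted formula for $k=1$.

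For the inductive step, suppose the formula holds for some $k \geq 1$. By definition $Q_{k+1} = \mathrm{Sq}^{2^{k+1}} Q_k + Q_k \mathrm{Sq}^{2^{k+1}}$, so
\[
Q_{k+1}(x) = \mathrm{Sq}^{2^{k+1}} Q_k(x) + Q_k\bigl(\mathrm{Sq}^{2^{k+1}}(x)\bigr).
\]
The crucial point is that the second term vanishes: since $x$ has degree $2$ and $2^{k+1} \geq 4 > 2$ for $k \geq 1$, instability of the Steenrod squares gives $\mathrm{Sq}^{2^{k+1}}(x) = 0$, whence $Q_k\bigl(\mathrm{Sq}^{2^{k+1}}(x)\bigr) = 0$. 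Substituting the inductive hypothesis $Q_k(x) = \mathrm{Sq}^{2^k}\cdots\mathrm{Sq}^{2^0}(x)$ into the first term then yields $Q_{k+1}(x) = \mathrm{Sq}^{2^{k+1}}\mathrm{Sq}^{2^k}\cdots\mathrm{Sq}^{2^0}(x)$, completing the induction.

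I do not expect a serious obstacle here; the only point requiring care is the vanishing of the commutator term, which is exactly where the hypothesis $\deg x = 2$ is used, via $2^{k+1} > 2$ for $k \geq 1$. The same degree restriction is what isolates the genuine content of the formula in the base case, where the extra summand is killed by the derivation property of $\mathrm{Sq}^1$ rather than by instability.
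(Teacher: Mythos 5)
Your proof is correct and follows essentially the same route as the paper's: the base case $k=1$ is handled by the unstable condition $\mathrm{Sq}^2(x)=x^2$ together with the Cartan/derivation property giving $\mathrm{Sq}^1(x^2)=0$, and the higher cases reduce to $Q_k(x)=\mathrm{Sq}^{2^k}Q_{k-1}(x)$ because instability kills $\mathrm{Sq}^{2^k}(x)$ when $2^k>2$. The only difference is presentational: you phrase the reduction as an explicit induction, while the paper states it directly for $k\geq 2$.
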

\begin{proof}
We prove this lemma by induction on $k$.
Suppose  $k=1$. By the unstable condition, we have $\mathrm{Sq}^2(x)=x^2$. By the Cartan formula, we have $\mathrm{Sq}^1 (x^2)=0$.
Hence, we have
\begin{align*}
Q_{1}(x)&=\mathrm{Sq}^{2} Q_{0} (x)+ Q_{0}{\mathrm{Sq}^2}(x)
\\
&=\mathrm{Sq}^{2} Q_0 (x). 
\end{align*}
For $k \geq 2$, 
by the definition of $Q_{i+1}$ and the unstable condition, we have
\begin{align*}
Q_{k}(x)&=\mathrm{Sq}^{2^{k}} Q_{k-1} (x)+ Q_{k-1} \mathrm{Sq}^{2^{k}} (x)
\\
&=\mathrm{Sq}^{2^{k}} Q_{k-1} (x). \qedhere
\end{align*}
\end{proof}

From $d_2(u_1)=v_2$ and  the action of $Q_0$, $Q_1$, $Q_2$ on $v_2$, by Lemma~\ref{lemma:2.1} and the Leray-Serre spectral sequence for the 
above path space fibration, 
we have 
\begin{align*}
d_3(u_1^2)&= v_3, \\
d_5(u_1^4)&=f_5, \\
d_9(u_1^8)&=f_9.
\end{align*}
It is easy to see that
\begin{align*}
E_3&=\mathbb{Z}/2[w_2', w_2'', w_3', w_3'', v_3, u_1^2], 
\\
E_4&=\mathbb{Z}/2[w_2', w_2'', w_3', w_3'', u_1^4], 
\\
E_6&=\mathbb{Z}/2[w_2', w_2'', w_3', w_3'', u_1^8]/(f_5), 
\end{align*}
In $\mathbb{Z}/2[w_2', w_2'', w_3', w_3'']$, we consider the sequence
$
f_5$, 
$f_9
$.
It is a regular sequence since their greatest common divisor is $1$.
 Therefore, we have 
 \[
 E_{10}=\mathbb{Z}/2[w_2', w_2'', w_3', w_3'', u_1^{16}]/(f_5,f_9).
 \]
 

To prove that the spectral sequence collapses at the $E_{10}$-term, we consider the Stiefel-Whitney class of a 
 real representation 
 \[
 \rho\colon G \to O(16)
 \]
 defined as follow.
 On the one hand, 
since $\mathbb{C}$ is isomorphic to  $\mathbb{R}^2$ as a vector space over $\mathbb{R}$, 
$\mathbb{C}^2$ is isomorphic to $\mathbb{R}^4$. Then, the tautological representation of $SU(2)$ on $\mathbb{C}^2$
induces the inclusion map
\[
j\colon SU(2)\to SO(4).
\]
On the other hand, we have an isomorphism
\[
SO(4)=SU(2)\times_{\mathbb{Z}/2} SU(2).
\]
Since 
\[
G=SU(2) \times_{\mathbb{Z}/2} (SU(2)\times_{\mathbb{Z}/2} SU(2)) = SU(2) \times_{\mathbb{Z}/2}  SO(4), 
\]
we may regard $G$ as a  subgroup of 
\[
SO(4) \times_{\mathbb{Z}/2} SO(4)
\]
with the inclusion map induced by 
\[
j\times 1\colon SU(2) \times SO(4)\to SO(4)\times SO(4).
\]
Let 
\[
\varphi\colon SO(4) \times SO(4) \to O(16)
\]
be the real representation given by 
\[
(g_1, g_2) m= g_1 m g_2^{-1}
\]
where $(g_1, g_2) \in SO(4) \times SO(4)$ and $m$ is a $4\times 4$ matrix with real coefficients.
Then, $\varphi$ induced a $16$-dimensional real representation. 
\[
\varphi'\colon SO(4)\times_{\mathbb{Z}/2} SO(4) \to O(16).
\]
We define the representation $\rho$ as the restriction of $\varphi'$ to $G$.


 \begin{proposition}\label{proposition:2.2}
 The Stiefel-Whitney class $w_{16}(\rho)$ of the real representation $\rho$ is indecomposable in  $H^{*}(BG)$.
 It is 
 represented by $u_1^{16}$ in the Leray-Serre spectral sequence associated with the fiber sequence
 \[
 B\mathbb{Z}/2\to BG \to BSO(3)^3.
 \]
 \end{proposition}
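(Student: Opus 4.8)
The plan is to prove both assertions simultaneously by restricting $\rho$ along the fibre inclusion $\iota\colon B\mathbb{Z}/2 \to BG$, computing the total Stiefel--Whitney class there, and then reading off the two conclusions via the edge homomorphism of the Leray--Serre spectral sequence. The point is that naturality of Stiefel--Whitney classes turns the geometric statement about $w_{16}(\rho)$ into the purely algebraic computation of how the central generator of $G$ acts under $\rho$.

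First I would identify the restriction $\rho|_{\mathbb{Z}/2}$ to the central subgroup $\mathbb{Z}/2\subset G$ that gives the fibre. By the commutative diagram relating the centre of the first $SU(2)$-factor to the fibre, this subgroup is generated by the image of $(-1,1,1)\in SU(2)^3$. Tracing this element through the embedding $G\hookrightarrow SO(4)\times_{\mathbb{Z}/2}SO(4)$, the factor $-1\in SU(2)$ maps to $-I_4$ in the first $SO(4)$ (left multiplication by the quaternion $-1$), while the remaining factors give $I_4$ in the second $SO(4)$. Under $\varphi$ the pair $(-I_4,I_4)$ acts on a $4\times 4$ matrix $m$ by $m\mapsto (-I_4)\,m\,I_4^{-1}=-m$, so $\rho$ sends the central generator to $-I_{16}$. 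Hence $\rho|_{\mathbb{Z}/2}$ is a sum of sixteen copies of the sign representation, whose total Stiefel--Whitney class is $(1+u_1)^{16}=1+u_1^{16}$ over $\mathbb{Z}/2$. Therefore $\iota^* w_{16}(\rho)=u_1^{16}$.

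This single identity yields both claims. The edge homomorphism $H^*(BG)\to H^*(B\mathbb{Z}/2)$ of the spectral sequence is exactly $\iota^*$ and factors through the projection onto $E_\infty^{0,*}$, so $\iota^* w_{16}(\rho)=u_1^{16}$ says precisely that $w_{16}(\rho)$ is represented by $u_1^{16}$; in particular $u_1^{16}$ is a nonzero permanent cycle. For indecomposability I would argue by contradiction: if $w_{16}(\rho)=\sum_i a_i b_i$ with $\deg a_i,\deg b_i>0$, then applying $\iota^*$ gives $u_1^{16}=\sum_i \iota^*(a_i)\,\iota^*(b_i)$, where each factor has positive degree at most $15$. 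But the image of $\iota^*$ in degree $k$ equals $E_\infty^{0,k}\subseteq E_{10}^{0,k}$, and in the computed $E_{10}=\mathbb{Z}/2[w_2',w_2'',w_3',w_3'',u_1^{16}]/(f_5,f_9)$ the only generator of fibre degree is $u_1^{16}$, so $E_{10}^{0,k}=0$ for $0<k<16$. Thus every $\iota^*(a_i)$ and $\iota^*(b_i)$ vanishes, forcing the right-hand side to be $0$, a contradiction.

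The main obstacle is the first step: correctly pinning down $\rho|_{\mathbb{Z}/2}$, which requires tracking the central element through the two nested $\mathbb{Z}/2$-amalgamations defining $G$ and then through the matrix representation $\varphi$. Once the central generator is seen to act as $-I_{16}$, everything else is formal: the Stiefel--Whitney computation reduces to the Frobenius identity $(1+u_1)^{16}=1+u_1^{16}$ in $\mathbb{Z}/2[u_1]$, and both conclusions follow from naturality together with the already-established structure of $E_{10}$.
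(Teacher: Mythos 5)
Your proof is correct and follows essentially the same approach as the paper: both restrict $\rho$ to the central $\mathbb{Z}/2$ fibre, identify the restriction as $16\lambda$ so that $\iota^{*}w_{16}(\rho)=u_1^{16}\neq 0$, and then use the structure of the spectral sequence below fibre degree $16$ to force $u_1^{16}$ to survive. Your version merely makes explicit two things the paper leaves implicit, namely the tracing of the central element $(-1,1,1)$ through the embedding into $SO(4)\times_{\mathbb{Z}/2}SO(4)$ to see that it acts as $-I_{16}$, and the edge-homomorphism argument for indecomposability, which is a welcome but not essentially different elaboration.
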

 
 \begin{proof}
 Let $\iota\colon \mathbb{Z}/2\to G$ be the inclusion map of the center of $G$. We may regard $\mathbb{Z}/2$ as a subgroup of the center of 
 $SU(2)^3$. Thus, the inlcusion map $\iota$ factors through the projection $SU(2)^3\to G$.

The restriction of $\rho$ to the center of $G$ is $16\lambda$ where $\lambda$ is the nontrivial $1$ dimensional real representation
of $\mathbb{Z}/2$.
So, the Stiefel-Whitney class $w_{16}(\rho \circ \iota)$ is nonzero.
If $d_r(u_1^{16})\not=0$ for some $r$, up to degree $\leq 16$, $H^{*}(BG)$ is generated by $w_2', w_2'', w_3', w_3''$. 
However, since 
$\iota$ factors through $SU(2)^3$, and since $BSU(2)^3$ is $3$-connected, the induced homomorphism sends $w_2', w_2'', w_3', w_3''$ to zero.
So, $w_{16}(\rho\circ \iota)$ is zero. It is a contradiction.
Therefore, $u_1^{16}$ is a permanent cycle in the Leray-Serre spectral sequence and it is represented by $w_{16}(\rho)$.
 \end{proof}

By Propositino~\ref{proposition:2.2}, the spectral sequence collapses at the $E_{10}$-term, that is, $E_\infty=E_{10}$ and 
we obtain the  first half of Theorem~\ref{theorem:1.1}.


\begin{proposition}
\label{proposition:2.3}
We have 
\[ H^{*}(BG)=\mathbb{Z}/2[w_2', w_2'', w_3', w_3'', u_{16}]/(f_5, f_{9}),
\]
where $u_{16}$ is the Stiefel-Whitney class $w_{16}(\rho)$.
\end{proposition}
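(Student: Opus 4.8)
The plan is to promote the collapse statement $E_\infty=E_{10}$ to an assertion about the ring $H^{*}(BG)$ itself, the only real content being the resolution of the multiplicative extension problem via a comparison of associated graded rings. First I would record that every class on the right-hand side is a genuine element of $H^{*}(BG)$: the generators $w_2',w_2'',w_3',w_3''$ are defined as pullbacks $\pi^{*}(\pi_i^{*}w_k)$, so they lie in the bottom filtration $E_\infty^{*,0}=\mathrm{image}(\pi^{*})$ and represent the corresponding generators there, while $w_{16}=w_{16}(\rho)$ lies in $F^0H^{16}=H^{16}$ and represents $u_1^{16}\in E_\infty^{0,16}$ by the previous proposition. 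Next I would verify that the two relations hold exactly in $H^{*}(BG)$. Since $f_5=d_5(u_1^4)$ and $f_9=d_9(u_1^8)$ are boundaries, their images in $E_\infty^{*,0}$ vanish; as the composite $H^{*}(BSO(3)^3)\twoheadrightarrow E_\infty^{*,0}\hookrightarrow H^{*}(BG)$ is the edge homomorphism $\pi^{*}$, this gives $\pi^{*}(f_5)=\pi^{*}(f_9)=0$, i.e. $f_5=f_9=0$ in $H^{*}(BG)$. Consequently there is a well-defined ring homomorphism
\[
\Phi\colon R:=\mathbb{Z}/2[w_2',w_2'',w_3',w_3'',w_{16}]/(f_5,f_9)\longrightarrow H^{*}(BG)
\]
sending each generator to the corresponding genuine cohomology class.

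To prove $\Phi$ is an isomorphism I would filter both sides and compare associated graded rings. On the target I use the Leray–Serre filtration, so that $\mathrm{gr}\,H^{*}(BG)=E_\infty=E_{10}$. On $R$ I assign to $w_2',w_2'',w_3',w_3''$ filtration degree equal to their cohomological degree and to $w_{16}$ filtration degree $0$, matching the base (first) degree of the spectral sequence. Because $f_5$ and $f_9$ involve only the row-zero generators, each is homogeneous for this filtration, the defining ideal is a graded ideal, and hence
\[
\mathrm{gr}\,R\cong\mathbb{Z}/2[w_2',w_2'',w_3',w_3'',w_{16}]/(f_5,f_9)
\]
as a bigraded ring, canonically identified with $E_{10}$ upon renaming $w_{16}$ as $u_1^{16}$. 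By construction $\Phi$ preserves filtrations, and the induced map $\mathrm{gr}\,\Phi\colon\mathrm{gr}\,R\to\mathrm{gr}\,H^{*}(BG)=E_{10}$ is exactly this identification, hence an isomorphism. Since both filtrations are exhaustive and finite in each total degree, an isomorphism on associated graded rings lifts to an isomorphism of $\Phi$ itself, which is the statement of the proposition.

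The genuinely delicate point is the multiplicative extension problem lurking in the passage from $E_\infty$ to $H^{*}(BG)$: a priori a product of lifted classes could differ from the lift of the corresponding product in $E_\infty$ by terms of strictly higher filtration. The associated-graded comparison is precisely what sidesteps this, but it works only because $\Phi$ is already well-defined as a ring map, which rests on checking the two relations $f_5=f_9=0$ \emph{on the nose} in $H^{*}(BG)$ rather than merely modulo higher filtration. I therefore regard the boundary computation $\pi^{*}(f_5)=\pi^{*}(f_9)=0$, together with the observation that the defining relations are homogeneous for the filtration grading (so that $\mathrm{gr}\,R$ does not collapse below $E_{10}$), as the crux of the argument; the remaining steps are the standard formal comparison of filtered rings.
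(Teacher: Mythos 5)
Your proof is correct and takes essentially the same route as the paper: both deduce the proposition from the collapse $E_\infty=E_{10}$ of the Leray--Serre spectral sequence together with the fact that $w_{16}(\rho)$ represents $u_1^{16}$. The only difference is that you spell out the standard filtered-ring comparison (edge homomorphism killing $f_5$, $f_9$ on the nose, then the associated-graded argument resolving the multiplicative extension problem), a step the paper treats as immediate.
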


To prove the second half of Theorem~\ref{theorem:1.1}, let us define a ring homomorphism
\[
\eta \colon \mathbb{Z}/2[w_2', w_2'', w_3', w_3'', u_{16}]\to \mathbb{Z}/2[w_2', w_2'', u, u_{16}]
\]
by $\eta(w_2')=w_2'$, $\eta(w_2'')=w_2''$, $\eta(w_3')=w_2' u$, $\eta(w_3'')=w_2''u$, $\eta(u_{16})=u_{16}$.
It induces the following ring homomorphism
\[
\eta'\colon H^{*}(BG)\to \mathbb{Z}/2[w_2', w_2'', u , u_{16}]/(u^3 w_2'w_2''(w_2'+w_2'')).
\]
Let 
\[
R_0=\mathbb{Z}/2[w_2', w_2'', w_3', w_3'', u_{16}].
\]
From Proposition~\ref{proposition:2.3}, using the fact that $f_5$, $f_9$ is a regular sequence in 
$R_0$, 
the Poincar\'{e} series of $H^{*}(BG)$ is given by
\[
PS(H^{*}(BG), t)=\dfrac{(1-t^5)(1-t^9)}{(1-t^2)^2(1-t^3)^2(1-t^{16})}.
\]
On the other hand, it is also easy to see that
the image of $\eta'$  is 
spanned by monomials
\[
u^\ell w_2'^m w_2''^nu_{16}^k, 
\]
where $k$ ranges over all non-negative integers, for $\ell=0,1,2$, $(m, n)$ satisfies the condition $m+n\geq \ell$, 
and for $\ell\geq 3$, $(m,n)$ satisfies one of the following conditions:
$m\geq \ell$, $n=0$ or $m=1$, $n\geq \ell-1$ or $m=0$, $n\geq \ell$.
Thus, the Poincar\'{e} series $PS(\mathrm{Im}\, \eta', t)$ 
is
\[
\dfrac{1}{1-t^{16}}\left(  \dfrac{1}{(1-t^2)^2}+ t\left( \dfrac{1}{(1-t^2)^2}-1\right)+t^2 \left( \dfrac{1}{(1-t^2)^2}-1-2t^2\right)
+\sum_{\ell=3}^\infty\dfrac{3t^{3\ell}}{1-t^2}\right).
\]
Then, we have
\[
PS(H^{*}(BG),t)=PS(\mathrm{Im}\, \eta', t).
\]
Thus,  $\eta'$
is injective. In view of this injective homomorphism $\eta'$, 
it is easy to see that elements $g_7$, $g_8$ corresponding to $u w_2' w_2'' (w_2'+w_2'')$, $u^2  w_2' w_2'' (w_2'+w_2'')$, respectively,   are
nilpotent. 
So we obtain the following second half of Theorem~\ref{theorem:1.1}.


\begin{proposition}\label{proposition:2.4}
The nilradical  of $H^{*}(BG)$ is the ideal generated by
two elements 
$g_7$ and $g_8$.
\end{proposition}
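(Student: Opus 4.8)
The plan is to read off the nilradical directly from the injection $\eta'$, transporting the entire question into its target ring $S = \mathbb{Z}/2[w_2', w_2'', u, w_{16}]/(u^3 D)$, where I abbreviate $D = w_2'w_2''(w_2'+w_2'')$. The point of this move is that in $S$ the nilradical is transparent; note already that $\eta'(g_7) = uD$, that $\eta'(g_8) = u^2 D$, and that the defining relation of $S$ is precisely $\eta(f_9) = u^3 D$.

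First I would dispose of the easy inclusion $(g_7, g_8) \subseteq \mathrm{nil}\,H^*(BG)$. In $S$ one computes $\eta'(g_7)^3 = u^3 D^3 = (u^3 D)D^2 = 0$ and $\eta'(g_8)^2 = u^4 D^2 = u(u^3 D)D = 0$, so injectivity of $\eta'$ gives $g_7^3 = g_8^2 = 0$; since the nilradical is an ideal, it contains $(g_7, g_8)$.

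For the reverse inclusion the crux is to compute $\mathrm{nil}\,S$ explicitly. As $\mathbb{Z}/2[w_2', w_2'', u, w_{16}]$ is a UFD and the squarefree part of $u^3 D = u^3\,w_2'\,w_2''\,(w_2'+w_2'')$ is $uD$, the nilradical $\mathrm{nil}\,S = \sqrt{(u^3 D)}/(u^3 D)$ is the ideal of $S$ generated by $uD$. Using $u^3 D = 0$ to annihilate every product $uD\cdot u^\ell$ with $\ell \geq 2$, this ideal collapses to
\[
\mathrm{nil}\,S = uD\,\mathbb{Z}/2[w_2', w_2'', w_{16}] + u^2 D\,\mathbb{Z}/2[w_2', w_2'', w_{16}].
\]
Each summand already lies in $\mathrm{Im}\,\eta'$, because for $c_0, c_1 \in \mathbb{Z}/2[w_2', w_2'', w_{16}] \subseteq \mathrm{Im}\,\eta'$ one has $uD\,c_0 = \eta'(g_7 c_0)$ and $u^2 D\,c_1 = \eta'(g_8 c_1)$. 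Hence $\mathrm{nil}\,S \subseteq \mathrm{Im}\,\eta'$, and in fact $\mathrm{nil}\,S = \eta'(J)$ with $J = \{\,g_7 c_0 + g_8 c_1 : c_0, c_1 \in \mathbb{Z}/2[w_2', w_2'', w_{16}]\,\} \subseteq (g_7, g_8)$.

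To assemble, I would use that injectivity of $\eta'$ makes $x \in H^*(BG)$ nilpotent if and only if $\eta'(x)$ is, so that $\eta'(\mathrm{nil}\,H^*(BG)) = \mathrm{Im}\,\eta' \cap \mathrm{nil}\,S = \mathrm{nil}\,S = \eta'(J)$, and therefore $\mathrm{nil}\,H^*(BG) = J$; combined with $J \subseteq (g_7, g_8) \subseteq \mathrm{nil}\,H^*(BG)$ this forces $\mathrm{nil}\,H^*(BG) = (g_7, g_8)$. The step I expect to be the real obstacle is the explicit determination of $\mathrm{nil}\,S$ and its interaction with $\mathrm{Im}\,\eta'$: even though $\eta'(g_7) = uD$ generates $\mathrm{nil}\,S$ as an ideal of $S$, the element $u$ does not belong to $\mathrm{Im}\,\eta'$, so $g_7$ alone cannot generate the nilradical of $H^*(BG)$. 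One must observe that the single $u$-shift $u^2 D = \eta'(g_8)$ returns to the image while the relation $u^3 D = 0$ kills all higher shifts, which is exactly why the two elements $g_7$ and $g_8$ are both necessary and, taken together, sufficient.
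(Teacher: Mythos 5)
Your proposal is correct, and it follows the same route as the paper: everything is transported through the injective homomorphism $\eta'$ into the ring $S=\mathbb{Z}/2[w_2',w_2'',u,w_{16}]/(u^3w_2'w_2''(w_2'+w_2''))$, where nilpotence is visible. The only real difference is completeness: the paper's text verifies only the easy inclusion (``in view of this injective homomorphism $\eta'$, it is easy to see that $g_7$, $g_8$ \dots are nilpotent'') and leaves the reverse inclusion implicit, whereas you prove it in full --- the UFD computation $\mathrm{nil}\,S=(uD)$ with $D=w_2'w_2''(w_2'+w_2'')$, the collapse of that ideal to $uD\,\mathbb{Z}/2[w_2',w_2'',w_{16}]+u^2D\,\mathbb{Z}/2[w_2',w_2'',w_{16}]$ via the relation $u^3D=0$, and the key observation that this subspace lies inside $\mathrm{Im}\,\eta'$ as $\eta'$ of a subset of $(g_7,g_8)$. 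That last step is precisely what makes the inclusion $\mathrm{nil}\,H^{*}(BG)\subseteq(g_7,g_8)$ go through, and it is the part the paper's ``it is easy to see'' glosses over; your write-up makes the proposition's equality, not just the containment, fully rigorous.
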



\section{The mod $2$ Bockstein spectral sequence}\label{section:3}

For each $ i\geq 0$, we have $Q_i Q_i=0$. Hence, for a graded vector space $M$ over $\mathbb{Z}/2$ with $Q_i$-action, we may define 
$Q_i$-cohomology $H^{*}(M, Q_i)$ by 
\[
\mathrm{Ker}\, Q_i /\mathrm{Im}\, Q_i.
\]
In particular, the $E_2$-term of the mod $2$ Bockstein spectral sequence of $BG$ is the $Q_0$-cohomology $H^*(H^{*}(BG), Q_0)$.
In this section, to show that the mod $2$ Bockstein spectral sequence of $BG$
collapses at the $E_2$-term, we compute 
the $Q_0$-cohomology of the mod $2$ cohomology of $BG$, i.e. 
\[
H^{*}(H^{*}(BG),Q_0)=\mathrm{Ker}\, Q_0/\mathrm{Im}\, Q_0.
\]

First, we recall the action of $Q_0$ on $H^{*}(BG)$. The action of $Q_0$ on $w_2', w_2'', w_3', w_3''$ is clear from that on $H^{*}(BSO(3))$. 
We need to determine the action of $Q_0$ on $u_{16}$.


\begin{proposition}
In $H^{*}(BG)$, we have $Q_0(u_{16})=0$.
\end{proposition}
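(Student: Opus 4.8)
The plan is to compute $Q_0(w_{16})$ directly from the Wu formula, using the identity $Q_0=\Sq^1$. For the top Stiefel--Whitney class of an $n$-dimensional real bundle the Wu formula gives $\Sq^1(w_n)=w_1w_n+(n-1)w_{n+1}$; applied to $\rho$ with $n=16$ and reduced mod $2$ this reads
\[
Q_0(w_{16})=\Sq^1\bigl(w_{16}(\rho)\bigr)=w_1(\rho)\,w_{16}(\rho)+w_{17}(\rho).
\]
I would then dispose of the two summands separately. The class $w_{17}(\rho)$ vanishes because $\rho$ is $16$-dimensional, so its Stiefel--Whitney classes vanish above degree $16$. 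The class $w_1(\rho)$ lies in $H^1(BG)$, which is zero: $G$ is connected, hence $BG$ is simply connected and has no cohomology in degree $1$. Therefore both terms vanish and $Q_0(w_{16})=0$.

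As an independent check that $w_1(\rho)=0$ one may instead examine the determinant of $\rho$. Recall that $\rho$ is the restriction to $G$ of the representation $\varphi'$ induced by $\varphi\colon SO(4)\times SO(4)\to O(16)$, $m\mapsto g_1 m g_2^{-1}$, on the space of $4\times 4$ real matrices. This space is the tensor product of the two standard $4$-dimensional representations, so $\det\varphi(g_1,g_2)=(\det g_1)^4(\det g_2)^4=1$ on $SO(4)\times SO(4)$; hence $\varphi$, and therefore the induced map $\varphi'$ and its restriction $\rho$, take values in $SO(16)$, which gives $w_1(\rho)=0$ once more.

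Since both ingredients are immediate---the dimension axiom for Stiefel--Whitney classes and the vanishing of $H^1(BG)$---I do not expect any genuine obstacle here. The only point requiring a moment's care is recording the correct mod $2$ coefficient $(n-1)$ in the formula for $\Sq^1 w_n$, which for $n=16$ equals $15$ and is therefore odd; in particular the $w_{17}(\rho)$ term is \emph{not} killed by its coefficient and must be removed by the dimension argument above.
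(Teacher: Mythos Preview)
Your proof is correct and follows essentially the same route as the paper: apply the Wu formula $\Sq^1 w_{16}(\rho)=w_{17}(\rho)+w_1(\rho)w_{16}(\rho)$, then kill $w_{17}(\rho)$ by the dimension of $\rho$ and $w_1(\rho)$ by the simple connectivity of $BG$. Your additional determinant check that $\rho$ lands in $SO(16)$ is a pleasant extra, but the paper does not use it.
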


\begin{proof}
The generator $u_{16}$ is defined as the Stiefel-Whitney class $w_{16}(\rho)$ of the $16$-dimensional real representation $\rho\colon G\to O(16)$.
Hence, $w_{17}(\rho)=0$. Since $BG$ is simply-connected, we also have $w_1(\rho)=0$.
By the Wu formula, we have $\mathrm{Sq}^1 w_{16}(\rho)=w_{17}(\rho)+w_1(\rho)w_{16}(\rho)$. Therefore, we have the desired result.
\end{proof}
Let 
\[
R_0=\mathbb{Z}/2[w_2', w_2'', w_3', w_3'', u_{16}].
\]
We consider the action of $Q_0$ on $w_2'$, $w_2''$, $w_3'$, $w_3''$, $u_{16}$ in $R_0$.
It is given by
\[
Q_0(w_2')=w_3', \quad Q_0(w_2'')=w_3'', \quad Q_0(w_3')=0, \quad Q_0(w_3'')=0, \quad Q_0(u_{16})= 0.
\]
Let 
\[
R_1=R_0/(f_5),
\quad 
R_2=R_0/(f_5, f_9).
\]
It is clear that $R_2=H^{*}(BG)$ and $H^*(H^{*}(BG), Q_0)=H^*(R_2, Q_0)$. 
We will prove the following Proposition~\ref{proposition:3.2} at the end of this section.


\begin{proposition}
\label{proposition:3.2}
We have 
\[
H^{*}(R_2, Q_0)=\mathbb{Z}/2[w_2'^2, w_2''^2, u_{16}]\otimes \Delta(g_4, g_8).
\]
\end{proposition}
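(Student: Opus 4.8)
The plan is to compute the Koszul-type homology of the differential graded algebra
$(R_0, Q_0)$ and then pass to the quotients $R_1 = R_0/(f_5)$ and $R_2 = R_0/(f_5,f_9)$ by means of two long exact sequences in $Q_0$-cohomology. First I would observe that $(R_0, Q_0)$ splits as a tensor product of complexes: since $Q_0(w_{16})=0$ and $Q_0$ acts on the two pairs $(w_2', w_3')$ and $(w_2'', w_3'')$ independently, we have $R_0 = \mathbb{Z}/2[w_{16}] \otimes \mathbb{Z}/2[w_2', w_3'] \otimes \mathbb{Z}/2[w_2'', w_3'']$ as a DGA, where on a single polynomial ring $\mathbb{Z}/2[w_2, w_3]$ the operation $Q_0$ is the derivation sending $w_2 \mapsto w_3$ and $w_3 \mapsto 0$. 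A direct (standard) computation gives $H^{*}(\mathbb{Z}/2[w_2,w_3], Q_0) = \mathbb{Z}/2[w_2^2, w_3^2]/(w_3^2) = \mathbb{Z}/2[w_2^2]$, because $\ker Q_0$ is spanned by even powers of $w_2$ times arbitrary powers of $w_3$ together with the image, and $w_3^2 = Q_0(w_2 w_3)$ is killed. By the Künneth theorem for $Q_0$-cohomology (valid over a field), I would conclude
\[
H^{*}(R_0, Q_0) = \mathbb{Z}/2[w_2'^2, w_2''^2, w_{16}].
\]

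Next I would handle the passage to $R_1$. There is a short exact sequence of $Q_0$-complexes
\[
0 \to R_0 \xrightarrow{\;\cdot f_5\;} R_0 \to R_1 \to 0,
\]
using that multiplication by $f_5$ is injective (as $f_5$ is a nonzerodivisor) and $Q_0$-equivariant up to the correction term $Q_0(f_5)$. Here the key algebraic input is that $f_5$ itself is \emph{not} a $Q_0$-cycle, so I must be careful: the cleaner route is to note $Q_0(f_5) = w_3' w_3'' + \text{(symmetric terms)}$ and instead work with the explicit congruences. The associated long exact sequence
\[
\cdots \to H^{n}(R_0,Q_0) \xrightarrow{\;\cdot f_5\;} H^{n+5}(R_0,Q_0) \to H^{n+5}(R_1,Q_0) \to \cdots
\]
then reduces computing $H^{*}(R_1, Q_0)$ to understanding the image of multiplication by $f_5$ (more precisely by the class it represents) on $\mathbb{Z}/2[w_2'^2, w_2''^2, w_{16}]$. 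I expect $f_5$ to act as multiplication by a class proportional to $g_4 = w_2' w_2''$ after passing to cohomology, so that the connecting maps introduce exactly the exterior-type generator $g_4$, yielding $H^{*}(R_1, Q_0) = \mathbb{Z}/2[w_2'^2, w_2''^2, w_{16}] \otimes \Delta(g_4)$. Repeating the same device with the short exact sequence $0 \to R_1 \xrightarrow{\cdot f_9} R_1 \to R_2 \to 0$ should introduce the generator $g_8$ and produce the claimed answer.

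The main obstacle I anticipate is controlling the connecting homomorphisms in these two long exact sequences, and in particular verifying that the relevant multiplication maps on $Q_0$-cohomology are \emph{zero}, so that each long exact sequence splits into short exact sequences and the extension is additively as stated. This is where the explicit identities $Q_1 v_2 \equiv f_5$ and $Q_2 v_2 \equiv f_9$ and the precise formulas for $g_4, g_7, g_8$ become essential: I would need to identify the cocycles in $R_1$ and $R_2$ representing $g_4$ and $g_8$, check they are genuine $Q_0$-cycles and not boundaries, and confirm the Poincaré series of the conjectured answer $\mathbb{Z}/2[w_2'^2, w_2''^2, w_{16}]\otimes \Delta(g_4,g_8)$ matches the Euler characteristic forced by the long exact sequences. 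A Poincaré-series bookkeeping argument, analogous to the one used for $\eta'$ in Section~\ref{section:2}, is likely the most efficient way to pin down the answer once the generators are exhibited, with the exact-sequence analysis serving to guarantee there are no hidden cycles beyond $g_4$, $g_8$ and their product.
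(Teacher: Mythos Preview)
Your overall strategy matches the paper's exactly: compute $H^*(R_0,Q_0)$ directly, then use the two short exact sequences coming from multiplication by $f_5$ and $f_9$ to pass to $R_1$ and $R_2$. However, there are two concrete points where your proposal goes astray.

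First, you miscompute $Q_0(f_5)$. Since $Q_0$ is a derivation and $Q_0(w_3')=Q_0(w_3'')=0$, one has
\[
Q_0(f_5)=Q_0(w_2')\,w_3''+Q_0(w_2'')\,w_3'=w_3'w_3''+w_3''w_3'=0
\]
over $\mathbb{Z}/2$; likewise $Q_0(f_9)=0$. So multiplication by $f_5$ (respectively $f_9$) is an honest chain map, and the short exact sequences $0\to R_0\xrightarrow{\cdot f_5}R_0\to R_1\to 0$ and $0\to R_1\xrightarrow{\cdot f_9}R_1\to R_2\to 0$ are exact sequences of $Q_0$-complexes with no correction terms needed.

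Second, the obstacle you anticipate---showing that multiplication by $f_5$ on $H^*(R_0,Q_0)$ is zero---dissolves for a reason you overlook: $f_5$ has odd degree $5$, while $H^{\mathrm{odd}}(R_0,Q_0)=0$. Hence the long exact sequence immediately breaks into short exact sequences
\[
0 \to H^{2i}(R_0,Q_0) \to H^{2i}(R_1,Q_0) \xrightarrow{\delta} H^{2i-4}(R_0,Q_0) \to 0,
\]
together with $H^{\mathrm{odd}}(R_1,Q_0)=0$. The new generator is pinned down by the identity $Q_0(g_4)=f_5$ in $R_0$: thus $g_4$ becomes a $Q_0$-cycle in $R_1$ with $\delta(g_4)=1$, giving $H^*(R_1,Q_0)=\mathbb{Z}/2[w_2'^2,w_2''^2,w_{16}]\otimes\Delta(g_4)$ at once. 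The second step is identical, using that $f_9$ has odd degree and $Q_0(g_8)=f_9$. No Poincar\'e-series bookkeeping is required.
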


The $E_1$-term of the mod $2$ Bockstein spectral sequence of $BG$  is the mod $2$ cohomology ring of $BG$
and $d_1$ is $Q_0$.
Since, by Proposition~\ref{proposition:3.2},  the $E_2$-term  has no nonzero odd degree element, 
the spectral sequence collapses at the $E_2$-term. It is also clear that $g_4^2=w_2'^2w_2''^2\not=0$, $g_8^2=0$ from
Theorem~\ref{theorem:1.1}. Hence, we obtain Theorem~\ref{theorem:1.2}.

Now, we complete the proof of Theorem~\ref{theorem:1.2} by proving Proposition~\ref{proposition:3.2}.


\begin{proof}[Proof of Proposition~\ref{proposition:3.2}]
We start with $H^*(R_0, Q_0)$.
It is clear that
\[
H^*(R_0,Q_0)=\mathbb{Z}/2[w_2'^2, w_2''^2, u_{16}].
\]
We denote by $(-)\times a$ the multiplication by $a$.
Consider a short exact sequence
\[
0 \to R_0 \stackrel{(-)\times f_5}{\longrightarrow} R_0 \to R_1 \to 0.
\]
Since $Q_0$ is a derivation and $Q_0 f_5=0$, 
$Q_0$ commutes with $(-)\times f_5$. Hence, this short exact sequence induces
a long exact sequence in $Q_0$-cohomology:
\[
\cdots   \to H^{i}(R_0,Q_0) \to H^{i}(R_1, Q_0) \stackrel{\delta_4}{\longrightarrow} H^{i-4}(R_0,Q_0) \to \cdots
\]
Since $H^{odd}(R_0,Q_0)=0$, this long exact sequence splits into short exact sequences:
\[
0 \to H^{2i}(R_0;Q_0) \to H^{2i}(R_1, Q_0) \stackrel{\delta_4}{\longrightarrow} H^{2i-4}(R_0,Q_0) \to 0
\]
and $H^{odd}(R_1, Q_0)=0$. Since $Q_0 g_4=f_5$ in $R_0$, 
$g_4$ is nonzero in $R_1$ and 
$\delta_4(g_4)=1$. Therefore, we have 
\[
H^{*}(R_1,Q_0)=\mathbb{Z}/2[w_2'^2, w_2''^2, u_{16}]\otimes \Delta(g_4).
\]
Next, let us consider a short exact sequence
\[
0 \to R_1 \stackrel{(-)\times f_9}{\longrightarrow} R_1 \to R_2 \to 0.
\]
Again, since $Q_0f_9=0$ and $Q_0$ is a derivation, it induces a long exact sequence in $Q_0$-cohomology.
As above, since $H^{odd}(R_1, Q_0)=\{ 0\}$, we have short exact sequences
\[
0 \to H^{2i}(R_1;Q_0) \to H^{2i}(R_2, Q_0) \stackrel{\delta_8}{\longrightarrow} H^{2i-8}(R_1,Q_0) \to 0
\]
and $H^{odd}(R_2, Q_0)=\{0\}$.
Since  $Q_0g_8=f_9$, we obtain the desired result
\[
H^{*}(R_2, Q_0)=\mathbb{Z}/2[w_2'^2, w_2''^2, u_{16}]\otimes \Delta(g_4, g_8). \qedhere
\]
\end{proof}


\begin{bibdiv}

\begin{biblist}

\bib{benson-wood-1995}{article}{
   author={Benson, D. J.},
   author={Wood, Jay A.},
   title={Integral invariants and cohomology of $B{\rm Spin}(n)$},
   journal={Topology},
   volume={34},
   date={1995},
   number={1},
   pages={13--28},
   issn={0040-9383},
   doi={10.1016/0040-9383(94)E0019-G},
}

\bib{feshbach-1981}{article}{
   author={Feshbach, Mark},
   title={The image of $H^{\ast} (BG,\,{\bf Z})$ in $H^{\ast} (BT,\,{\bf
   Z})$ for $G$ a compact Lie group with maximal torus $T$},
   journal={Topology},
   volume={20},
   date={1981},
   number={1},
   pages={93--95},
   issn={0040-9383},
   doi={10.1016/0040-9383(81)90015-X},
}

\bib{kono-1986}{article}{
   author={Kono, Akira},
   title={On the integral cohomology of $B{\rm Spin}(n)$},
   journal={J. Math. Kyoto Univ.},
   volume={26},
   date={1986},
   number={3},
   pages={333--337},
   issn={0023-608X},
   doi={10.1215/kjm/1250520870},
}

\bib{kono-yagita-1993}{article}{
   author={Kono, Akira},
   author={Yagita, Nobuaki},
   title={Brown-Peterson and ordinary cohomology theories of classifying
   spaces for compact Lie groups},
   journal={Trans. Amer. Math. Soc.},
   volume={339},
   date={1993},
   number={2},
   pages={781--798},
   issn={0002-9947},
   doi={10.2307/2154298},
}

\bib{quillen-1971}{article}{
   author={Quillen, Daniel},
   title={The spectrum of an equivariant cohomology ring. I, II},
   journal={Ann. of Math. (2)},
   volume={94},
   date={1971},
   pages={549--572; ibid. (2) 94 (1971), 573--602},
   issn={0003-486X},
   doi={10.2307/1970770},
}

\bib{quillen-1971-2}{article}{
   author={Quillen, Daniel},
   title={The ${\rm mod}$ $2$ cohomology rings of extra-special $2$-groups
   and the spinor groups},
   journal={Math. Ann.},
   volume={194},
   date={1971},
   pages={197--212},
   issn={0025-5831},
   doi={10.1007/BF01350050},
}

\bib{serre-1953}{article}{
   author={Serre, Jean-Pierre},
   title={Cohomologie modulo $2$ des complexes d'Eilenberg-MacLane},
   language={French},
   journal={Comment. Math. Helv.},
   volume={27},
   date={1953},
   pages={198--232},
   issn={0010-2571},
   doi={10.1007/BF02564562},
}

\end{biblist}
\end{bibdiv}

\end{document}